\newtheorem{definition}{Definition}[section]
\newtheorem{theorem}[definition]{Theorem}
\newtheorem{claim5}{Claim}
\newcommand{\diam}{\text {diam}}
\newcommand{\B}[1]{\ensuremath{\mathbb{#1}}}
\newcommand{\N}{\B{N}}
\newcommand{\Z}{\B{Z}}
\newcommand{\ba}[1]{\bar{#1}}
\begin{document}

\bibliographystyle{amsplain}

\title[Equivariant]{Equivariant Compactifications}

\author[J. Maissen]{James Maissen}
\address{Department of Mathematics, University of Texas at Brownsville, Brownsville, TX 78520, USA}
\email{jmaissen@yahoo.com}

\subjclass[2010]{Primary:  57S99.  Secondary:  22A10, 22C05, 22F50, 54A10, 54C20, 54H11, 54H15, 57S10} \keywords{transformation groups, group actions, extensions, convergence on compact sets}

\begin{abstract}
We show sufficient criteria for a group of homeomorphisms acting on a metric space $X$ to extend to one acting on a given compactification of $X$.
We give examples for when this can fail when one of the criteria is not met.
\end{abstract}

\maketitle

\section {Introduction}
Let $A: G \times X \rightarrow X$ be a group action on a separable metric space $X$.
The goal of this paper is to present sufficient conditions when given a compactification $C$ of $X$ that the group action $A$ will extend to a group action $\hat A: G \times C \rightarrow C$.
Obviously, these conditions must require that each element of the group $G$ have an extension from a homeomorphism on $X$ to a homeomorphism on $C$.
However, it is possible for the group action to fail to extend, as a group action, even when each of the elements of $G$ extends.
We give examples when each of the further conditions that we impose is not met that causes the group action to fail to extend.
Originally this was motivated in an attempt on the Hilbert-Smith conjecture, by means of extending a free $p$-adic group action on the space of irrationals{\cite {Maissen}}.
However this result can be extended to include complete separable metric spaces and have its own uses beyond our attack on the Hilbert-Smith conjecture.

\section{Extending Group Actions}

In the theorem below we give sufficient conditions to guarantee the extension of the group action as a group action. 
Later we will discuss possible consequences from not imposing these conditions.

\begin{theorem}
\label {groupaction}
Let $X$ be a metric space, $G$ be a compact metric group, and $A: G \times X \rightarrow X$ be a topological group action. 
If $(C,d_c)$ is a metric compactification of $X$ such that for all $g \in G$ the map $A(g, \cdot) = g(x): X \rightarrow X$ extends continuously to $\hat A(g, \cdot) = \hat g(x):C \rightarrow C$, then $\hat A: G \times C \rightarrow C$ is a continuous group action.
\end{theorem}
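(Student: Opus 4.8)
The plan is to separate the \emph{algebraic} axioms of a group action from the \emph{topological} content, and to spend essentially all of the effort on the latter. Since $X$ is dense in $C$ and $C$ is Hausdorff, a continuous extension of a map defined on $X$ is unique, so $g\mapsto\hat g$ is well defined. Applying this uniqueness to the identities $A(e,x)=x$ and $A(gh,x)=A(g,A(h,x))$, which hold on the dense subset $X$, immediately gives $\hat A(e,\cdot)=\mathrm{id}_C$ and $\widehat{gh}=\hat g\circ\hat h$ on all of $C$. Taking $h=g^{-1}$ shows $\hat g\circ\widehat{g^{-1}}=\widehat{g^{-1}}\circ\hat g=\mathrm{id}_C$, so each $\hat g$ is a homeomorphism and $\hat A$ is a genuine action by homeomorphisms. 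I expect this portion to be routine.

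The heart of the matter is joint continuity of $\hat A$ at an arbitrary point $(g_0,c_0)$. I would first reduce to the identity element of $G$: given $g_n\to g_0$ and $c_n\to c_0$, write $\hat g_n=\hat g_0\circ\widehat{g_0^{-1}g_n}$ and set $h_n=g_0^{-1}g_n\to e$. Since $\hat g_0$ is a uniformly continuous homeomorphism of the compact space $C$, it suffices to prove the single implication
\[
h_n\to e \text{ in } G \ \text{and}\ c_n\to c_0 \text{ in } C \ \Longrightarrow\ \widehat{h_n}(c_n)\to c_0 .
\]
To attack this I would fix $x\in X$ near $c_0$ and use the triangle inequality
\[
d_c\!\big(\widehat{h_n}(c_n),c_0\big)\le d_c\!\big(\widehat{h_n}(c_n),\widehat{h_n}(x)\big)+d_c\!\big(\widehat{h_n}(x),x\big)+d_c\!\big(x,c_0\big).
\]
The last term is small by the choice of $x$, and the middle term tends to $0$ because $\widehat{h_n}(x)=A(h_n,x)\to A(e,x)=x$ by continuity of the original action $A$ at $(e,x)$, with $x\in X$ fixed. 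Everything therefore hinges on the first term, i.e.\ on controlling $\widehat{h_n}$ on the pair of nearby points $c_n$ and $x$ uniformly in $n$.

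That uniform control is precisely an \emph{equicontinuity} statement for the family $\{\hat g:g\in G\}$, and this is where I expect the genuine difficulty to lie. Indeed, by the exponential correspondence for the locally compact space $C$, joint continuity of $\hat A$ is \emph{equivalent} to continuity of $g\mapsto\hat g$ into $C(C,C)$ with the uniform metric, and equicontinuity of the image is then automatic by Arzel\`a--Ascoli; so equicontinuity cannot be cheaper than the theorem itself. My proposed route is to extract it from the data on $X$: for any compact $K\subseteq X$ the restriction $A|_{G\times K}$ is uniformly continuous, since $G\times K$ is compact, and this yields a modulus of continuity for $\{\hat g|_K\}$ that is uniform in $g\in G$; one would then try to transfer such a bound to all of $C$ using the density of $X$, the continuity of each individual $\hat g$ on $C$, and the compactness of $C$, passing to the limit map-by-map while keeping the uniform threshold fixed. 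The delicate point, and the step I expect to be the main obstacle, is the behaviour at the boundary $C\setminus X$, where $X$ is only totally bounded rather than compact and the modulus above could a priori degrade; it is exactly here that the compactness of $G$, the compactness of $C$, and the standing hypothesis that every $\hat g$ extends continuously must be used together to prevent the maps $\widehat{h_n}$ from separating points that collapse in the limit. Once equicontinuity near $c_0$ is established, the first term above is small for large $n$, completing the displayed implication and hence the proof.
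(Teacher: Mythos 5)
Your algebraic preamble (uniqueness of continuous extensions off the dense subset $X$ gives $\widehat{gh}=\hat g\circ\hat h$ and $\hat e=\mathrm{id}_C$) is correct, and so is the reduction to continuity at $(e,c_0)$ and the three-term triangle inequality: the second and third terms are handled exactly as you say. But the proof has a genuine gap at precisely the point you yourself flag: the first term, $d_c\bigl(\widehat{h_n}(c_n),\widehat{h_n}(x)\bigr)$, i.e.\ the equicontinuity of $\{\hat g : g\in G\}$ near $c_0$. You correctly observe that this is essentially equivalent to the theorem, and then you only describe a strategy (``one would then try to transfer such a bound to all of $C$\dots''), conceding that the behaviour at $C\setminus X$ is ``the main obstacle.'' That obstacle is the entire content of the theorem, and the transfer you propose --- a uniform modulus on compact $K\subseteq X$ pushed to $C$ by density --- does not obviously go through, because $X$ need not contain compact sets that are large in $C$ and the modulus can degrade arbitrarily at the remainder. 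The paper's Section 3 examples show that some such degradation really happens when a hypothesis is dropped, so no soft density argument can close this.

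The paper closes the gap by contradiction with a diagonal construction you do not have. Assuming $\hat A$ fails to be jointly continuous, it produces $(g_i,x_i)\to(e,x)$ in $G\times X$ with $g_i(x_i)\to y\ne x$, fixes disjoint closed balls $\ba U\ni x$, $\ba V\ni y$, and then recursively builds group elements $h_k=g_{n_k}\circ h_{k-1}$ together with points $u_k,v_k$ lying in sets $W_k$ of diameter tending to $0$, arranged so that $h_k(u_i)\in U$ and $h_k(v_i)\in V$ for all $i\le k$. Compactness and metrizability of $G$ and $C$ then give limits $h_k\to h\in G$ and $u_k,v_k\to z\in C$, forcing $\hat h(z)\in\ba U\cap\ba V=\emptyset$; this contradicts the standing hypothesis that $\hat h$ is continuous. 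In other words, the hypothesis that \emph{every} element extends continuously is used not pointwise but against a limit element manufactured from the putative failure of joint continuity --- that is the idea missing from your proposal, and without it (or an equivalent) the argument is incomplete.
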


\begin{proof}
For ease of notation, define for each $g \in G$ the map $g: C \rightarrow C$ by $g(x) = \hat A (g,x)$.

By way of a contradiction, suppose that $\hat A$ is not continuous. 
With this assumption we will show that there is an element $h \in G$ such that the function $h :=   \hat A(h, \cdot):C\rightarrow C$ is not continuous.

Since $\hat A$ is not continuous, $C$ is metric, and $X$ is dense in $C$ there is a sequence $\{(g_i,x_i)\}_{i=1}^\infty \subset G \times X$ such that $(g_i,x_i) \rightarrow (g,x) \in G \times C$ but $g_i(x_i) \nrightarrow g(x) \in C$.

\begin{claim5}
Without loss of generality we may assume $g = e \in G$.
\end{claim5}

If the sequence $g^{-1}g_i(x_i) \rightarrow x \in C$, then $gg^{-1}g_i(x_i) \rightarrow g(x)$ since $g \in G$ implies $g$ is continuous. 
But then $g_i(x_i) \rightarrow g(x)$ since $gg^{-1}g_i(x_i)=g_i(x_i)$ as $x_i \in X$ and $A$ is a group action on $X$. 
Thus the sequence $g^{-1}g_i(x_i) \nrightarrow x \in C$ and  we may assume that $g = e \in G$.

\begin{claim5}
Without loss of generality we may assume that $g_i(x_i)$ converges to $ y \neq x$.
\end{claim5}
Since $g_i(x_i) \nrightarrow x \in C$ and $C$ is a compact metric space the sequence $\{g_i(x_i)\}_{i=1}^\infty$ has a convergent subsequence such that $g_{i_k}(x_{i_k}) \rightarrow y \in C \setminus \{x\}$. 
So we can assume that $g_i(x_i) \rightarrow y \neq x$.

\begin {claim5}
There is a sequence of group elements $\{ h_j \in G \}_{j=0}^\infty$, sets $\{W_j \subseteq X \}_{j=1}^\infty$, and points $\{ u_j \in X \}_{j=1}^\infty$, $\{ v_j \in X \}_{j=1}^\infty$ such that

\begin{enumerate} 

 \item \quad $\diam (W_j) \rightarrow 0$, 

\item \quad $u_j \in W_j \cap X$ and $v_j \in W_j \cap X$, and 

\item  \quad $h_j(u_i) \in U$ and $h_j(v_i) \in V$ for all $i \le j$.
\end{enumerate}

\end{claim5}
Let $0<r<  d_c(x,y) \neq 0$, let $n_0 = 0$, let $U := B_{\frac r 2}(x)$ and let $V := B_{\frac r 2}(y)$, so $\ba U \cap \ba V = \emptyset$.
For sake of induction, we define $h_0 := e \in G$ and $W_1 := B_{\frac r 2}(h_0^{-1}(x)) = U$.

Since $x_i \rightarrow x$ and $g_i(x_i) \rightarrow y$ in $C$, there exists a number $M_0 > 0$ such that whenever $n>M_0$ both $x_n \in U$ and $g_n(x_n) \in V$. 
Let $m_1 := M_0 +1$ and let $u_1:=h_0^{-1}(x_{m_1}) =x_{m_1} \in U=W_1$.

Since $g_i \rightarrow e$ in $G$, for the compact (finite) set $\{x_{m_1}\}$, there is a number $N_1 > m_1$ such that whenever $n>N_1$ the point $g_n(x_{m_1}) \in U$. 
Let $n_1 := N_1+1$ and let $v_1:=h_0^{-1}(x_{n_1})=x_{n_1} \in U=W_1$.

Let $h_1 := g_{n_1} \circ h_0 = g_{n_1} \in G$, then $h_1(u_1)=g_{n_1}(x_{m_1}) \in U$ and $h_1(v_1) = g_{n_1}(x_{n_1}) \in V$.
Suppose for a number $k \ge 1$ and all $1 \le j \le k$ that 

\begin{enumerate}
\item \quad $h_j \in G$ 

\item \quad $W_j = B_{\frac r j}(h_{j-1}^{-1}(x))$ 

\item \quad $u_j \in W_j \cap X$ and  $v_j \in W_j \cap X$ 

\item \quad $n_j > m_j > n_{j-1}$ 

\item \quad $h_{j-1}(u_j) = x_{m_j}$ and $h_{j-1}(v_j)=x_{n_j}$ 

\item \quad $h_j(u_i) \in U$ for all $1 \le i \le j$ and $h_j(v_i) \in V$ for all $1 \le i \le j$

\end{enumerate}
Let $W_{k+1} := B_{\frac r {k+1}}(h_{k}^{-1}(x))$.

Since $g_i \rightarrow e$ in $G$, for the compact (finite) set $\{h_k(u_i), h_k(v_i) : 1 \le i \le k\}$ there is a number $M_k > n_k$ such that whenever $n>M_k$ the point $h^{-1}_k(x_n) \in W_{k+1}$, the points $g_n(h_k(u_i)) \in U$ for all $1 \le i \le k$ and the points $g_n(h_k(v_i))\in V$ for all $1 \le i \le k$. 
Let $m_{k+1} := M_k +1$ and let $u_{k+1} := h^{-1}_k(x_{m_{k+1}}) \in W_{k+1}$.

Since $g_i \rightarrow e$ in $G$, for the compact (finite) set $\{x_{m_{k+1}}\}$ there is a number $N_{k+1} > m_{k+1}$ such that whenever $n>N_{k+1}$ the point $g_n(x_{m_{k+1}}) = g_n(h_k(u_{k+1})) \in U$. 
Let $n_{k+1} := N_{k+1} + 1$ and let $v_{k+1} := h^{-1}_k(x_{n_{k+1}})$.

Let $h_{k+1} := g_{n_{k+1}} \circ h_k \in G$ then $h_{k+1}(u_i) \in U$ and $h_{k+1}(v_i) \in V$ for $1 \le i \le k+1$.
Then as claimed there is the following for all $n \in \N$ and all $k \le n$ 
\begin{enumerate}

\item \quad $h_n \in G$  

\item \quad $W_n = B_{\frac r {n-1}}(h_{n-1}^{-1}(x)) \subseteq X$, thus $\diam(W_n) \rightarrow 0$  

\item \quad $u_n \in W_n \cap X$ and $v_n \in W_n \cap X$  

\item \quad $h_n(u_k) \in U$ and $h_n(v_k) \in V$  

\end{enumerate}

\begin {claim5}
There is an $h \in G$ such that $h(x) := \hat A(h, \cdot): C\rightarrow C$ is not continuous which contradicts our assumption that $\hat A$ is continuous.

\end{claim5}

Since $C$ is a compact metric space, there is a subsequence of $\{u_k\}_{k=1}^\infty$ which converges to a single point in $C$. 
Without loss of generality ignore subindices. Let $z \in C$ such that $u_k \rightarrow z$ in $C$. 
Since $\diam (W_n) \rightarrow 0$, we also have $v_k \rightarrow z$ in $C$ as well.

Since $G$ is a compact metric group, the sequence $\{h_k\}_{k=1}^\infty$ has a convergent subsequence to an element in $G$. 
Without loss of generality ignore subindices. Let $h \in G$ such that $h_k \rightarrow h$ in $G$.

For a fixed $i \in \N$ and the corresponding compact (finite, two point) set $\{u_i, v_i\}$ since $h_k \rightarrow h$ in $G$ it follows that $h_k(u_i) \rightarrow h(u_i)$ and $h_k(v_i) \rightarrow h(v_i)$. 
For $k > i$ the points $h_k(u_i) \in U$ and $h_k(v_i) \in V$ thus $h(u_i) \in \ba U$ and $h(v_i) \in \ba V$.

Since $h\in G$ we should have $h:C\rightarrow C$ being continuous, so $h(u_i)\rightarrow h(z) \in \ba U$ and $h(v_i) \rightarrow h(z) \in \ba V$. 
Thus we have $h(z) \in \ba U \cap \ba V = \emptyset$ which means that $h:C\rightarrow C$ is discontinuous as claimed.

This is a contradiction and thus it must be the case that $\hat A$ is continuous as desired.

\end{proof}

\section{Examples}
One might think that simply requiring every element of a group action upon a space to extend would be sufficient to force the group action to extend.
However, we impose by Theorem \ref {groupaction} two additional criteria, namely that the group be compact and that the compactification be metric.
We now give an example of a simple space, provide free group actions on that space, and demonstrate that these group actions will not extend when one of the criteria from the theorem is not met.

Consider the space  $X := \N \times \Z_2$, where $\N$ denotes the natural numbers and $\Z_2$ the group of two elements. 
Let $C$ be a compactification of $\N$ and let $Y : = C \times \Z_2$ be the corresponding compactification of $X$.

\quad

\quad

\begin{picture}(300,130)(0,0)

\put (10,130) {\makebox(0,0) { $\Z_2$ } }

\put (10, 110) {\makebox(0,0) {\tiny {1} }}

\put (35, 105) {\makebox(0,0) {$\bullet$} }
\put (80, 105) {\makebox(0,0) {$\bullet$} }
\put (125, 105) {\makebox(0,0) {$\bullet$} }
\put (170, 105) {\makebox(0,0) {$\bullet$} }
\put (215, 105) {\makebox(0,0) {$\dots$}}
\put (250, 105) {\makebox(0,0) {$\therefore$} }
\put (310, 105) {\makebox(0,0) {$C \setminus \N \times \{1\}$ } }

\put (10, 80) {\makebox(0,0) {\tiny {0} }}

\put (35, 75) {\makebox(0,0) {$\bullet$ } }
\put (80, 75) {\makebox(0,0) {$\bullet$} }
\put (125, 75) {\makebox(0,0) {$\bullet$} }
\put (170, 75) {\makebox(0,0) {$\bullet$} }
\put (215, 75) {\makebox(0,0) {$\dots$}}
\put (250, 75) {\makebox(0,0) {$\therefore$} }
\put (310, 75) {\makebox(0,0) {$C \setminus \N \times \{0\}$ } }

\put (37, 55) {\makebox(0,0) { \tiny {1} } }
\put (82, 55) {\makebox(0,0) { \tiny {2} } }
\put (127, 55) {\makebox(0,0) { \tiny {3} } }
\put (172, 55) {\makebox(0,0) { \tiny {4} } }
\put (217, 55) {\makebox(0,0) {\tiny {\dots} } }

\put (105, 25) {\makebox (0,0) { $\N $ } }
\put (300, 25) {\makebox(0,0) {$C \setminus \N$} }

\end{picture}
\quad

We will first show that the condition that the group be compact is not a spurious choice.
We will define a free group action on $X$ by a non-compact group, and show that this group action does not extend to any compactification, $Y$, defined above.
Define for each $i \in \N$, the homeomorphism $f_i : X \rightarrow X$ given by:

$$f_i (n, z) := \left \{ \begin{matrix} (n,z \oplus_{_{\Z_2}} 1) && n = i \\ (n,z) && n \neq i \end{matrix} \right \}$$

\noindent
Let $e : X \rightarrow X$ be the identity, and let $G_w$ be the group generated by $\{ f_i \}_{i=1}^{\infty}$ (in other words the countable weak product of $\Z_2$ actions).  
For any compact set $K$, let $N_K := max \{ n \in \N | (n, z) \in K \}$, then for $i > N_K$ we have ${f_i}_{|_K} = e$. 
We have that $f_i \rightarrow e$ with the topology generated by convergence on compact sets.

For each $i \in \N$ the homeomorphism $f_i$ extends to a $\hat {f_i} : Y \rightarrow Y$ by:

$$\hat {f_i} (\alpha) := \left \{ \begin{matrix} f_i(\alpha) && \alpha \in X \\ \alpha && \alpha \in Y \setminus X \end{matrix} \right \}$$

\noindent
Let $\hat e: Y \rightarrow Y$ be the identity on $Y$. 
The extension, $\hat f_i$, is also a homeomorphism since the function $f_i$ is a homeomorphism, the composition $\hat f_i \circ \hat f_i = \hat e$, and for any neighborhood $N_y$ of $Y \setminus X$ where $N_y \subset Y \setminus \{(n,z) | n \le i, z \in \Z_2 \}$ we have $\hat {f_i}_{|_{N_y}} = \hat e$ and, thus $\hat f_i$ is continuous.
Since every $g \in G_w$ is the finite composition of elements from $\{ f_i \}_{i=1}^{\infty}$, the map $g : X \rightarrow X$ extends to a homeomorphism $\hat g : Y \rightarrow Y$ as well.

Yet, in the sequence $\{ \hat {f_i} \}_{i=1}^{\infty}$, we do not have $\hat {f_i} \rightarrow \hat e$. 
To see this, consider $\alpha \in Y \setminus X$ and any neighborhood, $N_\alpha \subset Y$, of $\alpha$. 
For every $N \in \N$, there is an $n_N > N$ such that $(n_N, z) \in N_\alpha$ for some $z \in \Z_2$. 
Now $\hat f_{n_N} (n_N, z) \ne \hat e (n_N, z) = (n_N, z)$, so $\lim \hat {f_i} \ne \hat e$. 

Even though every element of the group $G_w$ extends to a homeomorphism from $Y$ to itself and the group structure on $G_w$ is maintained, the topology of the group action is not.
It is possible to define extensions of $G_w$ to some compactifications of $X$, but not those compactifications in the form that we defined for $Y$.
Given a $Y$ defined above, define $Z := \nicefrac Y \sim$ by $(c, z) \sim (c^\prime, z^\prime)$ if and only if $c = c^\prime \in C \setminus \N$.
The group action $G_w$ extends as a group action to $Z$, where for every $f \in G_w$ we have $f_{| Z \setminus X} \equiv 1_Z$, the identity on the space $Z$.

Next we will justify our decision to require also that the compactification be a metric compactification.
Let us consider a larger group, a compact zero-dimensional group $G_s \cong \Pi_{i=1}^{\infty} \Z_2$. 
For every $\gamma = (\gamma_i) \in \Pi_{i=1}^{\infty} \Z_2$, define $f_\gamma : X \rightarrow X$ by $f_\gamma (n, z) :=  (n,z \oplus_{_{\Z_2}} \gamma_n)$. 
These functions yield a $G_s$ group action on $X$.
If, for every $\gamma \in G_s$, the homeomorphism $f_\gamma$ extends to a homeomorphism $\hat f_\gamma : Y \rightarrow Y$, then we will show that $C = \beta \N$, the Stone-\u{C}ech compactification of $\N$.

For each $\gamma = (\gamma_i) \in \Pi_{i=1}^{\infty} \Z_2$, define a function $F_\gamma: \N \rightarrow \{0,1\}$ by $F_\gamma (n) = \gamma_n$. 
Since $C \times \{0\}$ is open in $Y$ and since $f_\gamma$ extends to a homeomorphism $\hat f_\gamma$, there is a continuous extension $\hat F_\gamma : C \rightarrow \{0,1\}$ given by

$$ \hat F_\gamma (\alpha) = \left \{ \begin{matrix} F_\gamma (\alpha) && \alpha \in \N \\  0 && \alpha \in C \setminus \N, \hat f_\gamma (\alpha, 0) \in C \times \{0\} \\ 1 && \alpha \in C \setminus \N, \hat f_\gamma (\alpha, 0) \in C \times \{1\} \end{matrix} \right \}$$

\noindent
Since every function $F : \N \rightarrow \{0,1\}$ extends continuously to a function $\hat F: C \rightarrow \{0,1\}$ by definition of the Stone-\u{C}ech compactification, we have that the compactification $C = \beta \N$.
Since $Y = C \times \Z_2$, we have $Y \cong \beta \N$.
Since the only compact sets of $\beta \N$ that are metrizable are finite sets, the orbits of a compact group action on it would be trivial.
Consider the extension $\hat f_\gamma : Y \rightarrow Y$ of $f_\gamma$ where $\gamma = (1)_{i=1}^{\infty}$ which would perforce have to swap $(C \setminus \N) \times \{ 0 \}$ with $(C \setminus \N) \times \{ 1 \}$.
Thus $G_s$ does not act on $Y$ as a group action.

Again, as was the case with $G_w$ acting on $X$, there are extensions of the group action $G_s$ to compactifications of $X$ but we cannot mandate that those compactifications be of the form given to $Y$.
We see in this example a free compact zero dimensional group action that does not have any extension to a free action. 

Moreover the use of $Z_2$ as a factor of $X$ in our example could have been replaced by a more interesting compact group such as the $p$-adic numbers or some other pro-finite group without difficulty.

\end{document}